\documentclass[11pt,a4paper]{article}

\usepackage{amssymb,amsmath,amsthm}
\usepackage[english]{babel}
\usepackage{t1enc}
\usepackage[latin2]{inputenc}
\usepackage{epsfig}
\usepackage{subfigure}

\usepackage{sectsty}
\sectionfont{\large}
\usepackage{footnpag}

\newtheorem{thm}{Theorem}
\newtheorem{cor}[thm]{Corollary}
\newtheorem{defi}[thm]{Definition}
\newtheorem{lem}[thm]{Lemma}

\newtheorem{fact}[thm]{Fact}




\begin{document}

\title{Convex Polygons are Self-Coverable}
\author{Bal\'azs Keszegh\thanks{Research supported by Hungarian National Science Fund (OTKA), under grant PD 108406, NN 102029 (EUROGIGA project GraDR 10-EuroGIGA-OP-003), NK 78439, by the J\'anos Bolyai Research Scholarship of the Hungarian Academy of Sciences and DAAD.}\\
\and D\"om\"ot\"or P\'alv\"olgyi\thanks{Research supported by Hungarian National Science Fund (OTKA), under grant PD 104386 and under grant NN 102029 (EUROGIGA project GraDR 10-EuroGIGA-OP-003) and the J\'anos Bolyai Research Scholarship of the Hungarian Academy of Sciences.}
}


\maketitle

\begin{abstract} We introduce a new notion for geometric families called {\em self-co\-ver\-a\-bi\-li\-ty} and show that homothets of convex polygons are self-coverable.
As a corollary, we obtain several results about coloring point sets such that any member of the family with many points contains all colors.
This is dual (and in some cases equivalent) to the much investigated cover-decomposability problem.
\end{abstract}

\medskip

\section{Introduction}

\begin{defi}
A collection of closed sets $\cal S$ is {\em self-coverable} if there exists a {\em self-coverability function} $f$ such that for any $S \in \cal S$  and for any finite point set $P\subset S, |P|=k$ there exists a subcollection ${\cal S}'\subset {\cal S}$, $|{\cal S'}|\le f(k)$  such that $\cup {\cal S'}=S$ but no point of $P$ is in the interior of an $S'\in {\cal S}'$.
\end{defi}

Note that by definition points of $P$ are only in the exterior or on the boundary of regions from ${\cal S}'$. Also, points outside or on the boundary of $S$ are irrelevant, thus we can and will assume that all points of $P$ are in the interior of $S$.

E.g., it is easy to see that (closed) axis-parallel rectangles are self-coverable with $f(k)=k+1$ and that all discs in the plane (or, in fact, the homothets of any set that is a concave polygon or a set with a smooth boundary) are not self-coverable as already $f(1)$ does not exist.

The motivation to study this notion is the following theorem, which is a generalization of a theorem contained implicitly in Cardinal et al.\ \cite{colorful}.
Note that all logarithms are base two in this paper and every polygon is closed, unless stated otherwise.

\begin{thm}\label{connection}
If $\cal S$ is self-coverable with a monotone 
self-coverability function $f$ and any finite set of points can be colored with two colors such that any member of $\cal S$ with at least $m$ points contains both colors, then any finite set of points can be colored with $k$ colors such that any member of $\cal S$ with $m_k= m (f(m-1))^{\lceil \log k\rceil-1}\le k^d$ points contains all $k$ colors (here $d$ is a constant that depends only on $\cal S$).
\end{thm}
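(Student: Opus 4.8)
The plan is to prove the statement by induction on $k$, more precisely on $\lceil \log k\rceil$, doubling the number of colours at each step: the recursion has depth $\lceil \log k\rceil$, its base case is exactly the hypothesised two-colouring (threshold $m=m_2$), and each of the remaining $\lceil \log k\rceil - 1$ levels multiplies the threshold by $f(m-1)$, producing the claimed bound $m_k = m\,(f(m-1))^{\lceil \log k\rceil - 1}$. So the whole theorem reduces to a single \emph{doubling lemma}: if every finite point set admits a $k$-colouring in which every $S\in\mathcal S$ with at least $m_k$ points meets all $k$ colours, then every finite point set admits a $2k$-colouring in which every $S$ with at least $m_{2k}=m_k\,f(m-1)$ points meets all $2k$ colours.

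For the doubling step, I would first take a $k$-colouring $\chi\colon P\to\{1,\dots,k\}$ with threshold $m_k$, guaranteed by the induction hypothesis; call its classes the \emph{primary} colours and write $P_i=\chi^{-1}(i)$. I then split each primary class in two: applying the two-colouring hypothesis \emph{to the point set $P_i$}, I two-colour $P_i$ so that every $S$ with $|S\cap P_i|\ge m$ contains both sub-colours. Doing this independently for each $i$ turns the $k$ primary colours into $2k$ colours. A region $S$ now sees all $2k$ colours precisely when, for every $i$, it sees both sub-colours of class $i$, and by construction this holds as soon as $|S\cap P_i|\ge m$ for every $i$. Hence correctness reduces to the \emph{amplification claim}: every $S$ with $|S\cap P|\ge m_{2k}=m_k f(m-1)$ satisfies $|S\cap P_i|\ge m$ for all $i$.

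I would prove the amplification claim by contradiction, using self-coverability. Suppose some $S$ has $|S\cap P|\ge m_{2k}$ but $|S\cap P_i|\le m-1$ for some $i$; set $Q=S\cap P_i$, so $|Q|\le m-1$. (If $Q=\emptyset$ then $S$ misses colour $i$ despite having at least $m_k$ points, already contradicting the induction hypothesis, so assume $1\le|Q|\le m-1$.) Self-coverability, applied to $S$ with marked set $Q$, yields $\mathcal S'\subseteq\mathcal S$ with $|\mathcal S'|\le f(|Q|)\le f(m-1)$ (here the monotonicity of $f$ is used), covering $S$, and with no point of $Q$ in the interior of any member. Since $\bigcup\mathcal S'=S$, every point of $S\cap P$ lies in some member, so $\sum_{S'\in\mathcal S'}|S'\cap P|\ge |S\cap P|\ge m_k\,f(m-1)\ge m_k\,|\mathcal S'|$, and pigeonhole gives a member $S'$ with $|S'\cap P|\ge m_k$. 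By the induction hypothesis $S'$ meets all $k$ primary colours, in particular colour $i$; but its colour-$i$ points lie in $S\cap P_i=Q$ and hence are not interior to $S'$.

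\textbf{The main obstacle} is exactly this last point: the forced colour-$i$ witness sits on the \emph{boundary} of $S'$, and since the colouring hypothesis counts boundary points, this does not yet contradict anything. The crux of the proof is therefore the closed-set bookkeeping needed to discard such boundary witnesses. I would resolve it by exploiting that $P$ is finite: the points of $P\setminus Q$ lie at positive distance from $Q$, so each covering member can be slightly shrunk (for the concrete family, by an honest homothety toward its interior) so that the at most $m-1$ marked points become exterior while all nearby genuine points are retained, after which the pigeonholed member really does avoid colour $i$ and the contradiction is complete. Making this perturbation preserve both the covering and the point counts is the delicate part; everything else is routine. Finally, the side bound $m_k\le k^d$ is not part of the colouring construction: for the families of interest $f$ is linear, so $f(m-1)$ is a constant depending only on $\mathcal S$, whence $m_k=\Theta\!\left(k^{\log f(m-1)}\right)$ is polynomial in $k$ and one may take $d=d(\mathcal S)$ slightly above $\log f(m-1)$.
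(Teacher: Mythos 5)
Your core argument is essentially the paper's own proof. The paper proves the statement by composing an $a$-colouring (from induction) with $b$-colourings of each colour class and then invokes this with $b=2$, which is exactly your doubling lemma; its correctness argument --- if $S$ misses a colour then $|S\cap P_i|\le m-1$ for some class $i$, cover $S$ by at most $f(m-1)$ members avoiding $Q=S\cap P_i$ in their interiors (monotonicity of $f$ used here), pigeonhole to find a member with at least $\lceil m_{2k}/f(m-1)\rceil\ge m_k$ points of $P$, and contradict the guarantee of the primary colouring --- coincides step by step with your amplification claim.

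Where you depart from the paper is your ``main obstacle,'' and here two things should be said. First, the subtlety you flag is real, and the paper does not address it at all: it asserts that the pigeonholed member ``contains \dots\ no points of $P_i$,'' using ``contains'' once in the closed sense (for the pigeonhole count, which must count boundary points, since the covering only places each point of $S$ in some \emph{closed} member) and once in the interior sense (which is all that self-coverability guarantees about $Q$); under either single consistent reading one half of the sentence fails. So your instinct is sharper than the paper's write-up. Second, however, your proposed repair does not work as described: shrinking the pigeonholed member by a homothety toward an interior centre expels \emph{every} point on its boundary, not only the at most $m-1$ marked ones, and nothing in the definition of self-coverability prevents many of the $\ge m_k$ pigeonholed points of $P\setminus Q$ from lying exactly on that boundary (on the ``seams'' of the covering, which are handed to you by the definition, not chosen by you). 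After shrinking, the member may contain far fewer than $m_k$ points and the contradiction evaporates. Closing this gap needs a genuinely different ingredient --- e.g.\ a reduction to point sets in general position via the open/closed realization equivalence the paper mentions only in a footnote, or a strengthened self-coverability in which the covering members' boundaries meet $P$ only in $Q$ --- and for the abstract families of the theorem it is a genuine (if standard and widely tolerated) lacuna that your proof shares with the paper's.
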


As the version stated here is more general, we include a proof, which is similar to the ideas of \cite{colorful}, which in fact extend the ideas of  \cite{KPmulti}.

Note that this statement is dual to the statement that if $\cal S$ is cover-decomposable to two coverings then it is cover-decomposable into $k$ coverings as well. Unfortunately the dual statement is not equivalent to, nor implied by the above theorem. As we use Theorem \ref{connection} several times, it is useful to introduce some notation for the quantities that appear in it.

Our main results about self-coverability are about homothets of convex polygons where we prove the following.

\begin{thm}\label{thm:polygon}
The family of all homothets of a given convex polygon $C$ is self-coverable with $f(k)\le ck$ where the constant $c$ depends only on $C$.
\end{thm}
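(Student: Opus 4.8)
The plan is to reduce everything to a convenient normalization and then to a single building block. Writing $u_1,\dots,u_n$ for the outer unit normals of the edges of $C$, every homothet of $C$ is an intersection $\bigcap_{i=1}^n\{x:\langle x,u_i\rangle\le \beta_i\}$ for suitable reals $\beta_i$, and a point $p$ fails to lie in the interior of such a homothet exactly when $\langle p,u_i\rangle\ge\beta_i$ for some $i$. The one fact I would extract from boundedness of $C$ is that the normals $u_1,\dots,u_n$ positively span $\R^2$; equivalently, for every $q\neq p$ there is an index $i$ with $\langle q-p,u_i\rangle\le 0$, so the $n$ closed half-planes through $p$ parallel to the edges of $C$ already cover the plane. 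This is the combinatorial engine behind the base case.

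\emph{Single-point lemma.} I claim a homothet $S'$ with a single interior point $p$ can be covered by at most $n$ sub-homothets of $S'$, none containing $p$ in its interior. For each vertex $w$ of $S'$ I would take the homothet obtained by contracting $S'$ towards $w$ until $p$ first reaches its boundary; each such piece is contained in $S'$ and avoids $p$. That these $n$ pieces cover $S'$ is exactly the positive-spanning statement above (for a triangle this is the barycentric pigeonhole $\max(\alpha,\beta,\gamma)\ge 1/3$, and the general case follows the same way). The point of insisting that the covering pieces lie inside $S'$ is that a homothet contained in $S'$ automatically avoids every point outside $\mathrm{int}(S')$, which lets me run a recursion without having to re-examine points lying in other parts of $S$.

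For $k$ points I would run a divide-and-conquer on the point set inside the region being covered, always keeping the region a homothet $T$ and covering it by homothets contained in $T$ (so that only the points of $P$ in $\mathrm{int}(T)$ are relevant). Choosing an edge direction and a cut line through a median point splits the points of $T$ into two halves on opposite sides of the line, and one would cover each side together with the seam by a bounded number of contained homothets and recurse, giving a recurrence of the shape $T(k)\le 2T(k/2)+O(1)=O(k)$ with the constant depending on $n$. The main obstacle, and the place where the naive recursion breaks, is that cutting a homothet by a line parallel to one edge produces a truncated piece that is no longer a homothet of $C$, while containment forbids the covering pieces from bulging into the other side; a truncated region genuinely cannot be covered by finitely many homothets contained in it. Resolving this is the heart of the argument: I would enlarge the class of admissible regions to convex polygons whose edges use only the $n$ directions $u_1,\dots,u_n$ and whose combinatorial complexity stays bounded, show that this class is closed under the cut operation, and prove that a region of this class with no interior point is coverable by $O(1)$ contained homothets (the analogue of the single-point lemma). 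Controlling this bounded-complexity class so that both the per-cut seam cost and the empty-region base cost remain $O(1)$ — rather than degrading and destroying the linear bound — is the step I expect to be delicate.
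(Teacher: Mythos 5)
Your argument collapses at the single-point lemma, and the failure is not repairable, because that lemma contradicts Theorem \ref{thm:quadrangle} of this same paper. Contracting $S'$ toward each vertex until $p$ reaches the boundary does cover $S'$ when $S'$ is a triangle (your barycentric pigeonhole) or a parallelogram, but the positive-spanning fact does not imply it in general: the contraction toward a vertex $w$ stops at the \emph{first} edge constraint that becomes tight, and the resulting homothet is much smaller than the half-plane through $p$ associated with that edge, since every other edge has shrunk toward $w$ as well. Concretely, take the symmetric trapezoid $Q$ of Theorem \ref{thm:quadrangle} (bottom edge of length $1/c$, top edge and height equal to $1$) and let $p$ lie at height $\delta$ above the midpoint of the bottom edge. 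The contractions toward the two bottom vertices stop at ratio roughly $1/2$: the bottom edge of the first one ends exactly at $p_l$ and that of the second starts exactly at $p_r$, where $p_l,p_r$ are the points of the paper's quadrangle proof, at distance $\delta(c-1)/(2c)$ on either side of the midpoint. The contractions toward the two top vertices have all their points at height at least $\delta$. So the segment $p_lp_r$ of the bottom edge is covered by none of the four homothets. Worse, no bounded number of homothets can help: any homothet of $Q$ inside $Q$, avoiding $p$ in its interior and meeting the interior of $p_lp_r$, has height at most $\delta$ and hence bottom edge at most $\delta/c$, so at least $c-1$ homothets are needed for this \emph{single} point. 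Thus $f(1)$ is already unbounded over quadrangles, and no lemma bounding it by the number of vertices $n$ can be true.

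The same example defeats the rest of the plan. Your final bound is ``$O(k)$ with the constant depending on $n$,'' which is precisely the statement Theorem \ref{thm:quadrangle} disproves: any correct constant must depend on the shape of $C$, as the paper's does. And the proposed repair --- keep regions in a bounded-complexity class with edge directions among $u_1,\dots,u_n$ and cover empty regions by $O(1)$ homothets \emph{contained in them} --- also fails: the truncation of $Q$ below height $\delta$ belongs to this class and contains no points, yet every homothet of $Q$ contained in it has height at most $\delta$, hence diameter $O(\delta)$, so covering this region of width about $1/c$ needs on the order of $1/(c\delta)$ contained homothets, unbounded as $\delta\to 0$ even for fixed $Q$. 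The containment requirement, which you impose exactly so that the recursion never has to look at points outside the current cell, is what cannot be maintained; the paper's triangle proof deliberately lets its covering triangles stick out of the subregion $T_1$ and controls the interaction by hand. For general polygons the paper takes an entirely different route: for each $p\in P$ and each edge $e_i$ it places $O_C(1)$ guard points on the supporting line of $e_i$ (so that any homothet touching $p$ and reaching that line swallows a guard point), adds a far-away reflected copy of $C$, takes the generalized Delaunay triangulation with respect to $C$ of the enlarged $O(k)$-point set, and covers its $O(k)$ triangular faces (Euler's formula) by empty homothets, which the guard points keep inside $C$. The shape-dependence of the constant enters through the number of guard points per edge, and Theorem \ref{thm:quadrangle} shows this dependence is unavoidable.
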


\begin{cor}
For a given convex polygon\footnote{Note that in this corollary (and in the followings as well) it does not matter whether the underlying polygon is open or closed as any finite hypergraph system that we obtain by the covering relation can be realized by open polygons if and only if it can be realized by closed polygons.} $C$ there is a constant $d$ such that if any finite set of points can be colored with two colors such that any homothetic copy of $C$ with at least $m$ points contains both colors then any finite set of points can be colored with $k$ colors such that any homothetic copy of $P$ with at least $m_k= k^d$ points contains all $k$ colors.
\end{cor}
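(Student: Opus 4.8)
The plan is to obtain the corollary as a direct instantiation of Theorem~\ref{connection} with $\cal S$ taken to be the family of all homothets of $C$. First I would check that this family satisfies both hypotheses of Theorem~\ref{connection}. Self-coverability with a suitable self-coverability function is exactly the content of Theorem~\ref{thm:polygon}, which gives $f(k)\le ck$ with $c=c(C)$; moreover the bound $f(k)=ck$ is itself a valid monotone self-coverability function, since any subcollection witnessing a smaller value is also a witness for this larger value, and $ck$ is increasing in $k$, so the monotonicity requirement is met. The second hypothesis, that every finite point set admits a $2$-coloring in which each homothet containing at least $m$ points sees both colors, is precisely the assumption of the corollary. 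Hence Theorem~\ref{connection} applies verbatim.

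Applying it yields a $k$-coloring such that every homothet of $C$ containing at least
$$m_k=m\,\bigl(f(m-1)\bigr)^{\lceil\log k\rceil-1}$$
points contains all $k$ colors. It then remains to absorb this quantity into a clean polynomial bound $k^d$. Since $m$ is a fixed constant, so is $f(m-1)\le c(m-1)$; I would set $C_0=f(m-1)$ and use $\lceil\log k\rceil-1\le\log k$ together with the identity $a^{\log k}=k^{\log a}$ (valid since all logarithms are base two) to rewrite $C_0^{\lceil\log k\rceil-1}\le k^{\log C_0}$. The leading factor $m$ is likewise at most $k^{\log m}$ for $k\ge 2$, so that $m_k\le k^{\log m+\log C_0}$, and one may take $d=\log m+\log\!\bigl(c(m-1)\bigr)$, a constant depending only on $C$ and $m$.

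There is essentially no obstacle to overcome here: the statement is a packaging of the two preceding theorems, and the only genuine work is the elementary estimate converting an expression that is exponential in $\lceil\log k\rceil$ into one polynomial in $k$. The single point that deserves a line of care is ensuring that the self-coverability function fed into Theorem~\ref{connection} is genuinely monotone, which is handled by passing from the existence bound of Theorem~\ref{thm:polygon} to the explicit monotone majorant $f(k)=ck$.
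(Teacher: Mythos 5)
Your proposal is correct and is essentially the paper's own derivation: the corollary is obtained by feeding Theorem~\ref{thm:polygon} (upgraded to the monotone function $f(k)=ck$) into Theorem~\ref{connection}, with the bound $m\,(f(m-1))^{\lceil\log k\rceil-1}\le k^{d}$ already asserted in the statement of Theorem~\ref{connection} and verified by exactly the elementary estimate $a^{\log k}=k^{\log a}$ that you carry out. Your write-up merely makes explicit the arithmetic (and the choice $d=\log m+\log(c(m-1))$, a constant depending only on $C$ once $m$ is taken minimal) that the paper leaves implicit.
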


For triangles and squares we could even determine the exact value of $f$.

\begin{thm}\label{thm:triangle}
The family of all homothets of a given triangle is self-coverable with $f(k)=2k+1$ and this is sharp.
\end{thm}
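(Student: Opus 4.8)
The plan is to normalize the picture and then establish the two matching inequalities $f(k)\le 2k+1$ and $f(k)\ge 2k+1$ separately. Since an affine image of a homothet is again a homothet of the same ratio, self-coverability of the homothets of a triangle is affine invariant, so I may work with the concrete triangle $T=\{(x,y):x\ge 0,\ y\ge 0,\ x+y\le 1\}$. Every homothet contained in $T$ then has the form $H(a,b,s)=\{x\ge a,\ y\ge b,\ x+y\le s\}$, and such an $H$ keeps a point $P_i=(x_i,y_i)$ out of its interior exactly when $x_i\le a$, or $y_i\le b$, or $x_i+y_i\ge s$; that is, $P_i$ is cut off by the left edge, the bottom edge, or the hypotenuse. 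This three-way alternative is the combinatorial heart of the argument.

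For the upper bound I would give an explicit cover split into an ``upper'' and a ``lower'' part. I cover the non-dominated region $U=\{(x,y)\in T:\text{no }P_i\text{ has }x_i>x\text{ and }y_i>y\}$ by homothets with full hypotenuse $s=1$: walking down the staircase of the Pareto-maximal points and placing one such triangle at each of its inner corners uses $m+1\le k+1$ triangles, each automatically avoiding every point because its corner is undominated. I cover the complementary dominated region $D$ by homothets with $s<1$ whose hypotenuses pass through the points: when the points form an antichain $x_1<\cdots<x_k$, $y_1>\cdots>y_k$ (with $x_0=0,\ y_{k+1}=0$), the triangles $H(x_{i-1},y_{i+1},x_i+y_i)$ do this with exactly $k$ triangles, each carrying $P_i$ on its hypotenuse and its two neighbours on the left and bottom edges. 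The two counts add to $2k+1$. For general point sets, where some points are dominated and thus lie in the interior of $D$, I would finish by induction, since $D$ is again a staircase region on which the same construction recurses.

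For the lower bound I would exhibit a configuration forcing $2k+1$, namely $k$ points in \emph{generic antichain} position close to the hypotenuse: $x_1<\cdots<x_k$, $y_1>\cdots>y_k$, with pairwise distinct sums $x_i+y_i$. Two disjoint families of triangles are then unavoidable. First, the hypotenuse edge $x+y=1$ can only be covered by triangles with $s=1$, and placing suitable test points on it shows that covering it while avoiding all $P_i$ forces at least $k+1$ such triangles (one cannot straddle the shadow of any point). Second, for each $P_i$ the points $P_i+\varepsilon(-1,-1)$ must be covered as $\varepsilon\to 0^+$; checking the local angular sectors of all three edge types, and of the three vertex types, shows that the only way to cover the direction pointing straight at the origin while keeping $P_i$ off the interior is to have $P_i$ in the \emph{relative interior} of some triangle's hypotenuse, which forces $s=x_i+y_i<1$. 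Distinct sums make these $k$ triangles pairwise different, and since they have $s<1$ they are disjoint from the $k+1$ hypotenuse-covering triangles; hence at least $2k+1$ triangles are needed.

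The lower bound is the clean part: the ``three local sectors'' computation pins down exactly which triangles can cover the origin-ward direction at each point, and the $s=1$ versus $s<1$ dichotomy makes the two families provably disjoint, so no triangle can be shared between them. The main obstacle I foresee is the general upper bound. The staircase cover is transparent for an antichain, but bounding the cover of the dominated region $D$ by $k$ triangles in the presence of dominated \emph{interior} points requires an induction whose bookkeeping must not leak extra triangles; making that recursion on staircase regions close at exactly $k$ is where I expect the real work to lie.
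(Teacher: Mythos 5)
Your lower bound is correct, and it takes a genuinely different route from the paper's. The paper places the $k$ points on a vertical line through the apex and plants $2k+1$ dummy points, each of which must be covered by a distinct admissible homothet; you instead exhibit two provably disjoint families of forced triangles: the $s=1$ triangles needed to cover the hypotenuse (a straddling/test-point argument forces at least $k+1$ of them), and, via the local sector analysis at each $P_i$, one triangle per point carrying $P_i$ in the relative interior of its hypotenuse, hence with $s=x_i+y_i<1$; distinct sums keep these $k$ triangles pairwise distinct. Both arguments are sound; yours avoids dummy points entirely and isolates cleanly \emph{why} each point costs two extra triangles.

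The upper bound, however, has a genuine gap --- and not the one you flagged. Already in the antichain case your explicit cover fails: once $k\ge 2$, every triangle in your list has right-angle corner $(a,b)$ with $a>0$ or $b>0$ (the $U$-triangles sit at the staircase corners $(0,y_{Q_1}),(x_{Q_1},y_{Q_2}),\dots,(x_{Q_m},0)$, and your $D$-triangles $H(x_{i-1},y_{i+1},x_i+y_i)$ have corner $(0,y_2)$ at best), so a neighborhood of the origin corner of $T$ is covered by nothing. Concretely, take $k=2$, $P_1=(0.1,0.7)$, $P_2=(0.8,0.1)$: your five triangles are $H(0,0.7,1)$, $H(0.1,0.1,1)$, $H(0.8,0,1)$, $H(0,0.1,0.8)$, $H(0.1,0,0.9)$, and all of them miss the point $(0.05,0.05)\in T$. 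Covering $D$ by $k$ triangles is in fact possible, but needs a different construction --- for instance, recurse on the point of minimum sum $s_j=x_j+y_j$: the triangle $H(0,0,s_j)$ covers the corner, avoids all interiors, and splits the rest of $D$ into two independent subproblems on $j-1$ and $k-j$ points, giving $1+(j-1)+(k-j)=k$ --- so your formula, not just its bookkeeping, must be replaced.

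For comparison, the paper sidesteps the $U/D$ split entirely with a one-step induction: take the lowest point $p$, cover the truncated triangle above the horizontal line through $p$ by $2(k-1)+1$ homothets (induction on $P\setminus\{p\}$), stretch down those triangles whose bottom edge lies on the cut and avoids $p$, and add two triangles $T_l$, $T_r$ flanking $p$; this closes at $2k+1$ with no case analysis on dominated points. As written, your upper bound does not stand: the antichain case itself must be repaired before the recursion you postpone (for dominated interior points) can even begin.
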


Using that for homothets of a given triangle $m\le 12$ by a previous result of ours \cite{KP}, we deduce

\begin{cor}
For a given triangle $T$ any finite set of points can be colored with $k$ colors such that any homothet of\ \ $T$ with at least $m_k= 12\cdot (23)^{\lceil \log k\rceil-1}\le 12\cdot k^{\log 23}=O(k^{4.53})$ points contains all $k$ colors.
\end{cor}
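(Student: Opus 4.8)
The plan is to obtain the corollary as a direct instantiation of Theorem \ref{connection} with $\cal S$ taken to be the family of all homothets of the given triangle $T$. Two ingredients are needed to invoke the theorem. First, a monotone self-coverability function for this family: Theorem \ref{thm:triangle} supplies $f(k)=2k+1$, which is evidently monotone increasing, so the self-coverability hypothesis is met. Second, a two-coloring threshold $m$: by our previous result \cite{KP}, any finite point set can be two-colored so that every homothet of $T$ containing at least $12$ points receives both colors, giving $m=12$.

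With these two values fixed, I would simply read off the conclusion of Theorem \ref{connection}. Substituting $m=12$ and $f(m-1)=f(11)=2\cdot 11+1=23$ into $m_k=m\,(f(m-1))^{\lceil \log k\rceil-1}$ yields $m_k=12\cdot 23^{\lceil \log k\rceil-1}$, which is exactly the claimed bound. It then remains only to simplify the asymptotics. Since $\lceil \log k\rceil\le \log k+1$, we have $\lceil \log k\rceil-1\le \log k$, hence $23^{\lceil \log k\rceil-1}\le 23^{\log k}=k^{\log 23}$; evaluating $\log 23\approx 4.5236$ gives $m_k\le 12\cdot k^{\log 23}=O(k^{4.53})$.

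I do not expect a genuine obstacle here, as all the difficulty is already packaged into the two cited theorems; the corollary is a bookkeeping substitution. The only points that warrant a moment's care are checking that $f(k)=2k+1$ is admissible as a monotone self-coverability function (it is) and that the threshold from \cite{KP} is exactly $m=12$ rather than some larger constant, since the base $f(m-1)=23$ of the exponential — and therefore the exponent $\log 23$ — depends on that value. One may also note that $m_k$ is used only as a lower bound on a point count, so rounding it up to the nearest integer is harmless.
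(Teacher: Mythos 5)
Your proposal is correct and matches the paper's own (implicit) proof exactly: the corollary is deduced by plugging $m=12$ from \cite{KP} and $f(k)=2k+1$ from Theorem \ref{thm:triangle} into Theorem \ref{connection}, giving $m_k=12\cdot 23^{\lceil \log k\rceil-1}\le 12\cdot k^{\log 23}$. No gaps; the substitution and the asymptotic simplification are handled just as the paper intends.
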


This result improves the previous best upper bound which was approximately $O(k^{7.17})$ in \cite{colorful} which was already a big improvement upon our original upper bound from \cite{KPmulti} which was about $12^{2^k}$. On the other hand, we remark that this latter bound, although it is much worse, works in a more general setting, as it proved the cover-decomposability of octants into many coverings.
Since the first draft of this paper, using a method similar to the one introduced here, 
this was also improved to $O(k^{5.53})$ by Cardinal et al.\ \cite{colorful2}.

\begin{thm}\label{thm:square}
The family of all homothets of a square is self-coverable with $f(k)=2k+2$ and this is sharp.
\end{thm}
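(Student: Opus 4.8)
The plan is to establish the upper bound $f(k)\le 2k+2$ by an explicit construction and the matching lower bound by a weighting argument. First I would normalize the setup: since the homothets of a square are exactly the axis-parallel squares, I may assume $S=[0,1]^2$ and (after an arbitrarily small perturbation, harmless because the covering condition is closed) that the $k$ points of $P$ lie in the open square and have pairwise distinct $x$- and $y$-coordinates. The basic local fact I would record is a \emph{half-plane criterion}: a square $Q\subseteq S$ contains $p=(a,b)$ in its interior iff $Q$ straddles both lines $x=a$ and $y=b$; hence $Q$ avoids $p$ iff $Q$ lies in one of the four closed half-planes $\{x\le a\}$, $\{x\ge a\}$, $\{y\le b\}$, $\{y\ge b\}$, and $Q$ is admissible for all of $P$ iff for every $p_i$ one of its four half-planes contains $Q$.

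Two natural but \emph{failing} strategies motivate the real construction and should be dismissed at once. Cutting $S$ into vertical slabs between consecutive $x$-coordinates (the trick giving $k+1$ for rectangles) is useless here, since a slab of width $w$ forces about $1/w$ squares to reach height $1$; the same thin-strip obstruction kills every horizontal-slab or ``peel off the bottom layer'' recursion. Covering by a staircase of bottom-anchored empty squares together with a staircase of top-anchored ones also fails: three points such as $(0.4,0.3),(0.6,0.3),(0.5,0.7)$ leave an uncovered pocket on the column $x=0.5$ that lies out of reach of every bottom-anchored and every top-anchored empty square. These nested interior pockets are exactly what consumes the budget, and they are what make the square strictly harder than the triangle (whose ``top region'' is itself a homothet, permitting a clean height-sweep).

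Accordingly I would run an induction on $k$, seeded at $k=1$, in which each \emph{further} point is paid for by at most two squares. For $k=0$ the single square $S$ suffices, and for $k=1$ a pinwheel of four squares (for a central $p$, the four closed quadrants $[0,\tfrac12]^2$, etc.) covers $S$ and avoids $p$, giving $f(1)\le 4=2\cdot1+2$. For the step I would maintain a covering of a prescribed structural form and show that inserting one more point invalidates only a controlled set of squares, which can be repaired by deleting them and adding at most two new admissible squares while preserving coverage. Using the half-plane criterion, each repair square is pushed against the appropriate pair of the new point's half-planes so that it simultaneously plugs the freshly created pocket and remains admissible for all previously inserted points.

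The hard part will be this inductive step: formulating an invariant strong enough that (i) any newly inserted point lies in the interior of only boundedly many current squares, and (ii) the resulting hole is re-coverable by exactly two squares without opening new uncovered pockets elsewhere. This is where the thin-strip and interior-pocket phenomena above must be defeated at the same time, and it is the crux of the argument; forcing the bookkeeping to close at two rather than three squares per point is delicate (note that the very first point genuinely costs three over the empty covering, which is precisely why the sharp bound is $2k+2$ and not $2k+1$). Finally, for sharpness I would exhibit a configuration requiring $2k+2$ squares: the area bound already forces the single central point to need four (every admissible square then has side at most $\tfrac12$, hence area at most $\tfrac14$), and for general $k$ I would place the points along the main diagonal and argue with a weighting measure concentrated near the diagonal — plain area does not suffice, since large empty squares persist in the off-diagonal corners — to show that at least $2k+2$ admissible squares are unavoidable.
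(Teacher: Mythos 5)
Your upper-bound argument has a genuine gap, and you say so yourself: the entire inductive step --- the invariant guaranteeing that a newly inserted point invalidates only boundedly many squares and that the resulting hole can be repaired by exactly two admissible squares --- is left unformulated, and you correctly call it the crux. As stated, the point-insertion scheme is in real trouble: nothing in your setup bounds how many squares of the current covering contain the next point in their interior, and repairing one pocket can tear open others, so there is no reason the bookkeeping closes at two squares per point. The paper defeats the thin-strip obstruction you identify in a different way: it strengthens the induction hypothesis from squares to \emph{rectangles}. The key lemma states that an axis-parallel rectangle $R$ of width $a$ and height $b\le a$, with $k$ interior points and a marked point $p$ on its top edge, can be covered by at most $2k+2$ axis-parallel squares avoiding all of them, where if $b\ge a/2$ the cover is exact, and if $b<a/2$ the squares may overhang the top edge of $R$ by at most $a-2b$ while still avoiding $p$. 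With this statement the induction becomes divide-and-conquer: cut $R$ by a vertical line through a point of $P$ chosen in the middle band $b/2\le x\le a-b/2$ if one exists, and otherwise through the point closest to the vertical halving line; recurse on the two parts (using case (i) or (ii) as the aspect ratios dictate), and the counts add as $(2k_1+2)+(2k_2+2)=2(k_1+k_2+1)+2\le 2k+2$ since the cut point is consumed. The overhang clause is exactly what handles the thin strips that, as you observe, kill every naive slab or staircase recursion; without this (or an equivalent) strengthened statement your induction cannot be completed as described.

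The lower bound is likewise only a sketch: the ``weighting measure concentrated near the diagonal'' is never exhibited. Your atom-counting instinct is right, and the paper makes it concrete in just that way: put the $i$-th point of $P$ at $(1-1/2^i,\,1-1/2^i)$, attach to it two dummy points near the top and right edges (at $(1-1/2^{i-1}+i\epsilon,\,1-\epsilon)$ and its reflection), and add two further dummy points near the corners, at $(\epsilon,\epsilon)$ and $(1-\epsilon,1-\epsilon)$; for $\epsilon$ small enough, any square contained in $[0,1]^2$ and avoiding all $k$ points contains at most one dummy point in its interior, so all $2k+2$ dummy points force $2k+2$ squares. So the diagonal placement is correct, but as written your sharpness argument is a plan rather than a proof, and the verification that off-diagonal squares cannot swallow two dummy atoms is precisely the part that needs the explicit coordinates.
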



Unfortunately the existence of $m$ is an open problem for squares thus we can only deduce an if-then statement in this case:

\begin{cor}
If any finite set of points can be colored with two colors such that any axis-aligned square with at least $m$ points contains both colors then
any finite set of points can be colored with $k$ colors such that any (open or closed) square with at least $m_k= m (2m)^{\lceil \log k\rceil-1}$ points contains all $k$ colors.
\end{cor}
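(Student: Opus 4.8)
The plan is to derive this corollary as a direct specialization of Theorem~\ref{connection}, taking $\cal S$ to be the family of all homothets of a fixed axis-aligned square and feeding in the self-coverability function supplied by Theorem~\ref{thm:square}. All the genuine mathematical content lives in those two theorems; the corollary is obtained by matching hypotheses and performing one substitution.

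First I would invoke Theorem~\ref{thm:square}, which asserts that the homothets of a square are self-coverable with $f(k)=2k+2$. This function is manifestly monotone (indeed strictly increasing) in $k$, so the requirement in Theorem~\ref{connection} of a \emph{monotone} self-coverability function is met. Next I would verify that the coloring hypotheses align: the assumption of the corollary---that any finite point set admits a $2$-coloring under which every axis-aligned square with at least $m$ points contains both colors---is precisely the two-color hypothesis of Theorem~\ref{connection} for this $\cal S$, since the homothets of an axis-aligned square are exactly the axis-aligned squares. Theorem~\ref{connection} then delivers a $k$-coloring in which every member of $\cal S$ containing at least $m_k=m\,(f(m-1))^{\lceil \log k\rceil-1}$ points sees all $k$ colors. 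The sole computation is to evaluate $f(m-1)=2(m-1)+2=2m$, whence $m_k=m\,(2m)^{\lceil \log k\rceil-1}$, exactly as claimed.

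It remains to justify the parenthetical ``(open or closed)'' in the conclusion. By the footnote preceding the earlier corollary, the hypergraph induced on any finite point set by the covering relation is identical whether one uses open or closed squares; consequently the $k$-coloring produced works verbatim for both versions, which is what the phrasing records. I expect no real obstacle at the level of this corollary: the only points requiring a moment's care are ensuring that the correct self-coverability function $f(k)=2k+2$ (rather than one from a coarser covering) is the one substituted, and that the open/closed equivalence is genuinely harmless. Finally, I would note---as flagged immediately before the corollary---that the existence of $m$ is open for squares, so the statement is necessarily conditional; this is why the bound $m_k=m\,(2m)^{\lceil \log k\rceil-1}$ retains the symbolic $m$ rather than collapsing to an explicit numerical exponent as in the triangle case.
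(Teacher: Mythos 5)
Your proposal is correct and matches the paper's (implicit) derivation exactly: the corollary is stated as an immediate consequence of Theorem~\ref{connection} applied to the family of axis-parallel squares, using the monotone self-coverability function $f(k)=2k+2$ from Theorem~\ref{thm:square}, so that $f(m-1)=2m$ yields $m_k=m(2m)^{\lceil\log k\rceil-1}$. Your handling of the open/closed remark via the footnote and of the conditional nature of the statement is also consistent with the paper.
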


We also show that the constant in Theorem \ref{thm:polygon} cannot depend only on the number of vertices of $P$ as even for a quadrangle it can be arbitrarily big:

\begin{thm}\label{thm:quadrangle}
For every $c$ there exists a quadrangle $Q$ for whose homothets $f(k)\ge ck$.
\end{thm}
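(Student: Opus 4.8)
The plan is to exhibit, for every $c$, a single quadrangle $Q$ together with a point set for which no economical self-covering exists, the slack coming from a tunable near-degeneracy of $Q$. First I would record the guiding principle that forbids the easy cases: self-coverability is invariant under affine transformations, since an affine map sends homothets to homothets and preserves incidences. Hence parallelograms, being affine images of the square, inherit $f(k)=2k+2$ from Theorem \ref{thm:square}, and every triangle inherits $f(k)=2k+1$ from Theorem \ref{thm:triangle}, so the desired quadrangle must be a genuine (non-parallelogram) quadrilateral. I would therefore take a one-parameter family of trapezoids $Q_N$ with a long bottom edge $AB$ from $(-N,0)$ to $(N,0)$ and a short top edge from $(-1,1)$ to $(1,1)$. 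As $N\to\infty$ the legs become almost horizontal, so the interior angle of $Q_N$ at the bottom corner $A$ tends to $0$: the corner becomes an increasingly sharp ``spike,'' and this sharpness is the quantity I intend to convert into a large self-coverability constant.

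The hard point set is placed inside the spike at $A$. Concretely I would take $k$ points $p_1,\dots,p_k$ on a segment emanating from a neighborhood of $A$ into the interior of $Q_N$, spaced so that consecutive points are genuinely separated by the spike's geometry, and then analyze any subcollection $\mathcal S'$ with $\cup\mathcal S'=Q_N$ avoiding all $p_i$ in its interiors. The two geometric facts I would establish are: (i) every homothet of $Q_N$ has its four edges in the same four directions as $Q_N$, so a homothet excludes a point $p_i$ from its interior only by having one of these fixed-direction edges separate $p_i$ from the region it covers; and (ii) each thin cross-section of the spike near a point $p_i$ can be covered only by a homothet whose own, equally thin, spike is aligned with that cross-section. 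Combining (i) and (ii) I would show that the thin region between two consecutive blocking points can be swept out only by homothets of bounded ``angular reach,'' so that covering the stretch of the spike controlled by each $p_i$ forces $\Omega(N)$ distinct homothets; summing over the $k$ points yields $|\mathcal S'|\ge c(N)\,k$ with $c(N)\to\infty$, and choosing $N$ large makes $c(N)\ge c$.

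The main obstacle is precisely step (ii) together with the charging argument: I must rule out that a single cleverly chosen large homothet, which is allowed to extend far outside $Q_N$, covers long portions of the spike past several of the $p_i$ at once and thereby amortizes the cost below $c$ per point. This is where the near-parallel edge directions of $Q_N$ do the work, since the edge normals of $Q_N$ nearly align as $N\to\infty$, a homothet that reaches deep along the spike is forced to be correspondingly ``fat'' and hence to swallow a blocking point in its interior, a contradiction. I would formalize this by assigning to each homothet in $\mathcal S'$ the set of cross-sections of the spike it legitimately covers, proving via (i)--(ii) and convexity that each such set meets at most $O(1/N)$ of a pre-chosen family of $\Theta(Nk)$ ``witness'' cross-sections that must all be covered, after which a counting argument gives the bound. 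Verifying this quantitative $O(1/N)$ estimate, essentially a careful trigonometric bookkeeping of how far a spike-aligned homothet can extend before its opposite wall hits a point, is the one genuinely technical computation, but it should be routine once the separation statements (i) and (ii) are in place.
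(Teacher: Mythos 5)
Your preliminary reductions are sound: affine invariance does rule out parallelograms and triangles, and your thin trapezoid $Q_N$ is, up to reflection and rescaling, exactly the right shape (the paper's own example is a symmetric trapezoid whose parallel sides have length ratio $c$). The gap is in where you place the blocking points and in the central claim that each of them forces $\Omega(N)$ homothets. That claim is false, because the spike at $A$ is \emph{self-similar}: homothets of $Q_N$ have the very same spike and nest into it perfectly. Concretely, let $p$ be a point of the spike at height $h$ above the long edge. The homothet $H_a=A+h\,(Q_N-A)$ (ratio $h$, anchored at the apex $A$) covers the \emph{entire} portion of the spike below height $h$ --- all of your thin cross-sections at once --- and $p$ lies on the line of its top edge, hence not in its interior. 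Together with a bounded number (independent of $N$) of further homothets --- e.g.\ one anchored at the bottom-right corner whose left leg passes through $p$, one whose left leg lies on the left leg of $Q_N$ and which spans heights $[h,1]$ (its bottom edge is at $p$'s height, so $p$ is not interior to it), and one or two hanging from the top edge --- one covers all of $Q_N$ while avoiding $p$. So a point in the spike costs $O(1)$, not $\Omega(N)$: your fact (ii) fails because distinct cross-sections of the spike do not need distinct homothets, and the per-point charging collapses. A sanity check showing the ``sharp corner'' intuition cannot possibly work: triangles have arbitrarily sharp corners, yet $f(k)=2k+1$ for every triangle by Theorem \ref{thm:triangle}.

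What does force many homothets --- and this is the paper's argument --- is the \emph{short parallel edge}, precisely the feature that distinguishes a genuine trapezoid from both a triangle and a parallelogram. Place $p$ at distance $\delta$ below the short top edge. Any homothet contained in $Q_N$ covering a point of the top edge must have its own top edge on the line $y=1$; the two lines through $p$ parallel to the legs cut a ``shadow'' segment of length $2\delta(N-1)$ out of the top edge, and any homothet whose top edge meets this shadow and whose ratio exceeds $\delta$ reaches horizontally past $p$ at $p$'s height, hence contains $p$. So every homothet helping to cover the shadow has ratio at most $\delta$, and therefore covers at most $2\delta$ of the top edge, while the shadow has length $2\delta(N-1)$: this single point already forces at least $N-1$ homothets. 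Taking $k$ such points near the top edge, pairwise so far apart that no small homothet can serve two shadows, gives $f(k)\ge (N-1)k$, and choosing $N=c+1$ proves the theorem. Note the asymmetry that sank your construction: near the long edge and its corners a homothet covers the boundary with its \emph{long} edge, which is efficient; near the short edge it must use its \emph{short} edge, which is about $N$ times shorter than the shadow it must cover. The ratio of the two parallel sides, not the corner angle, is the source of the lower bound.
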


In the rest of the paper, we prove Theorem \ref{connection} in Section \ref{secconnection}, then Theorem \ref{thm:triangle} and \ref{thm:square} in Section \ref{sectrsq} and finally Theorem \ref{thm:polygon} and \ref{thm:quadrangle} in Section \ref{secdelaunay}. 

For more results on cover-decomposability and related problems, see 
\cite{PPT11} or \cite{P10}.

\section{Connection to cover-decomposability and proof of Theorem \ref{connection}}\label{secconnection}

\begin{proof}[Proof of Theorem \ref{connection}] Suppose that $\cal S$ is self-coverable with self-coverability function $f$ and any finite set of points $P$ can be colored with two colors such that any member of $\cal S$ with at least $m$ points contains both colors. Now we show by induction on $k$ that any finite set of points can be colored with $k$ colors such that any member of $\cal S$ with at least $m_k=m (f(m-1))^{\lceil \log k\rceil-1}$ points contains all $k$ colors.\\

Suppose we already know the statement for $a$ and $b$, from this we establish it for $k=ab$. Color $P$ with $a$ colors using induction and denote the color classes by $P_1,\ldots, P_a$. Now color each of these color classes with $b$ colors using induction. We claim that this coloring is good for $k=ab$. By contradiction, say $S\in \cal S$ does not contain all colors. This means that for some $1\le i\le a$ we have $|S\cap P_i|\le m_b-1$. Using the self-coverability of $\cal S$, cover $S\setminus P_i$ with $f(m_b-1)$ sets of $\cal S$. Using the monotonicity of $f$, one of these covering sets contains at least $\lceil \frac{m_k}{f(m_b-1)} \rceil$ points of $P$ but no points of $P_i$. This contradicts that our coloring with $a$ colors was good if $\lceil \frac{m_k}{f(m_b-1)} \rceil \ge m_a$.\\

Using the above argument for $b=2$, we can see that $m_k= m (f(m-1))^{\lceil \log k\rceil-1}$ satisfies $m_{2a}\ge m_a f(m-1)$, thus we are done (using the monotonicity of $m_k$ if $k$ is odd). 
\end{proof}

\section{Self-coverability of triangles and squares} \label{sectrsq}

\begin{proof}[Proof of Theorem \ref{thm:triangle}]

We now prove that for the family $\cal T$ of homothets of a given triangle $T$ we have $f(k)=2k+1$.
First by affine transformations we can transform the triangle to any other triangle, thus it is enough to prove the statement for one triangle. 
Further, by homothetic symmetry it is enough to prove self-coverability of one fixed size triangle $T_0$. Thus we can assume that $T$ has the three vertices $(0,0), (2,0), (1,1)$.

First we begin by giving a set $P$ of $k$ points for which $2k+1$ triangles are indeed needed to cover $T_0$. Let the set of points be on a vertical line passing through the vertex $(1,1)$, i.e.\ all points of $P$ have coordinates $(1,y); 0<y<1$.
Let $\epsilon$ be a small positive constant and for each point $(1,y)$ of $P$ assign two dummy points $(1-2\epsilon,y-\epsilon)$ and $(1+2\epsilon,y-\epsilon)$ inside $T_0$ (i.e., we choose $\epsilon$ such that $\epsilon<1-y$ for all points of $P$). Put an additional dummy point at coordinate $(1,y_{max}+\epsilon)$ above the highest point $(1,y_{max})$ of $P$.
It is easy to see that if $\epsilon$ is small enough then any triangle from $\cal T$ contained in $T_0$ and not containing a point of $P$ in its interior can cover at most one dummy point in its interior. As to cover $T_0$ in particular we need to cover all dummy points, so we need at least $2k+1$ triangles. See Figure \ref{fig:constructions}(a) for an illustration.

\begin{figure}
\centering
\includegraphics[width=9cm]{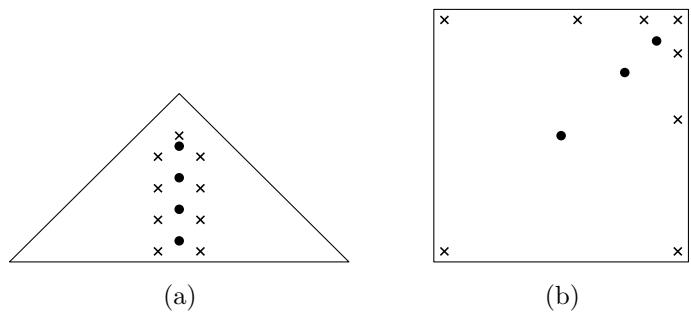}
\caption{Lower bound constructions for self-covering (a) a triangle and (b) a square}
\label{fig:constructions}
\end{figure} 

Now it is enough to prove that at most $2k+1$ triangles are always enough to cover $T_0$. We prove this by induction on $k$. For $k=0$ we can cover $T_0$ by itself. If $k\ge 1$, then take the\footnote{For simplicity we suppose that there is only one such point $p$ yet the proof can be easily modified to the case when there are multiple points with the same $y$-coordinate, in which case we have to handle all these points in one step of the induction.} point $p\in P$ with the smallest $y$-coordinate and denote it by $y(p)$.
Denote 
 by $H_{y}$ the halfplane with the horizontal line $y=y(p)$ as its boundary containing an infinite positive ray on the $y$-axis.
Apply induction on $P\setminus p$ and the triangle $T_1=T_0\cap H_{y(p)}$. See Figure \ref{fig:triangle}
for an illustration. We get a collection ${\cal S}_1$ of at most $2k-1$ triangles (homothetic to $T_0$) covering $T_1$. Denote by ${\cal S}_a$ those triangles from ${\cal S}_1$ whose bottom edge $e$ is on the bottom edge $e_1$ of $T_1$ but $e$ does not contain $p$ in its interior (thus $p$ can be a vertex of such a triangle). 

Now we can scale all triangles in ${\cal S}_a$, their top vertex as the center of scaling, so that their bottom edge goes onto the bottom edge $e_0$ of $T_0$.
Our new collection will consist of these scaled triangles and  ${\cal S}_1\setminus {\cal S}_a$.
The points of $e_1$ not covered by the scaled triangles form an interval and $p$ cuts this interval into a left interval $l$ and a right interval $r$. Now the triangle $T_l$ is (well) defined to be the triangle which intersects $e_1$ in exactly $l$ and has its bottom edge on $e_0$. Similarly, $T_r$ is (well) defined to be the triangle which intersects $e_1$ in exactly $r$ and has its bottom edge on $e_0$. We claim that these two triangles do not contain a point from $P$ in their interior. Indeed, first of all there are no points of $P$ under $H_{y(p)}$. Second, in the inductive construction there must be a triangle $T_l'$ whose bottom edge contains the whole $l$, thus $T_l'$ contains no point of $P$ in its interior and $T_l\cap  H_{y(p)}\subseteq T'_1$. The interior of $T_r$ is similarly disjoint from $P$.

\begin{figure}
\centering
\includegraphics[width=12cm]{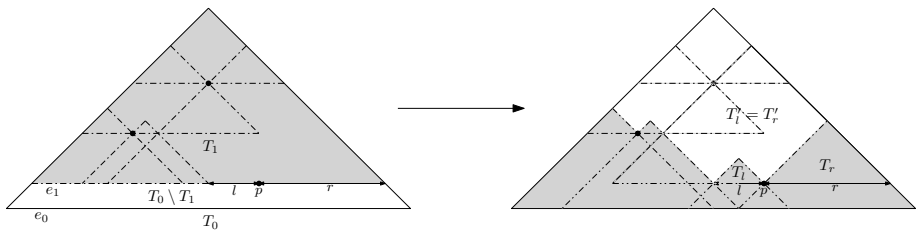}
\caption{Extending a covering of $T_1$ and adding the two triangles $T_l$ and $T_r$}
\label{fig:triangle}
\end{figure} 

We have seen that none of the triangles in this new collection of at most $2k-1+2=2k+1$ triangles contains a point of $P$ in its interior. Now we finish the proof by showing that this collection of triangles covers $T_0$. $T_1$ is trivially covered by induction. For an arbitrary point $q\in T_0 \setminus T_1$ at least one of the two diagonal lines (these are the lines parallel to the non-horizontal edges of $T$) across $q$ intersects $e_1$ in a point $q'$. If $q'$ is on $l$ (or respectively $r$), then $q$ is covered by $T_l$ (or respectively by $T_r$). If none of these happens then $q$ is covered by one of the scaled triangles.
\end{proof}

\begin{proof}[Proof of Theorem \ref{thm:square}]
First we begin by giving a set $P$ of $k$ points for which $2k+2$ squares are indeed needed to cover a square $R=[0,1]\times [0,1]$ if $k\ge 1$. The points are on one of the diagonals of $R$, the $i$th point has coordinates $(1-1/2^i,1-1/2^i)$. Let $\epsilon$ be a small positive constant and for each point $(1-1/2^i,1-1/2^i)$ of $P$ assign two dummy points $(1-1/2^{(i-1)}+i\epsilon,1-\epsilon)$ and $(1-\epsilon,1-1/2^{(i-1)}+i\epsilon)$ inside $R$. Put an additional dummy point at coordinate $(\epsilon,\epsilon)$ and $(1-\epsilon,1-\epsilon)$. It is easy to see that if $\epsilon$ is small enough ($\epsilon<1/2^k(k+1)$ is sufficient) then any square contained in $R$ and not containing a point of $P$ in its interior can cover at most one dummy point. As to cover $R$ in particular we need to cover all dummy points, so we need at least $2k+2$ squares. See Figure \ref{fig:constructions}(b) for an illustration.

Now it is enough to prove that at most $2k+2$ squares are always enough to cover a square. We again proceed by induction but we need a more general statement, Lemma \ref{lem:square}. The theorem follows from this lemma by taking $R$ to be a square.
\renewcommand{\qedsymbol}{}
\end{proof}

The following lemma states that if the ratio of the two sides of an axis-parallel rectangle $R$ is at most $2$ then it can be covered by $2k+2$ axis-parallel squares (while not covering the point set of size $k$), whereas if the ratio of the sides is bigger, then we can cover $R$ such that the squares may hang out over the top edge of $R$ but only until a limited height, and not covering an additional fixed point $p$ on the top edge.  

\begin{lem}\label{lem:square}
Given an axis-parallel rectangle $R$ with width $a$, height $b\le a$ and a point set $P\subset R$, $|P|=k$ 
and a point $p$ on the top edge of $R$, there is a collection $\cal R$ of at most $2k+2$ axis-parallel squares covering $R$, none of them containing a point from $P\cup \{p\}$ in their interior, such that
\begin{itemize}
\item[(i)] if $a/2\le b$ then $\cup {\cal R}=R$,
\item[(ii)] if $b<a/2$ then $R\subseteq\cup {\cal R}\subseteq R\cup R'$, where $R'$ is a rectangle whose bottom side is the top side of $R$
and its height is $b'=a-2b>0$.
\end{itemize}
\end{lem}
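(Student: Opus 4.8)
The plan is to prove Lemma~\ref{lem:square} by induction on $k$, in the same spirit as the proof of Theorem~\ref{thm:triangle}: peel off one point, recurse on a smaller rectangle, and extend the covering by two squares. The two aspect-ratio cases are exactly the strengthening that makes the induction close --- when the rectangle is ``fat'' ($a\le 2b$) we insist on an exact cover, while when it is ``thin'' ($a>2b$) the covering is allowed to protrude above the top edge, but by no more than $b'=a-2b$, so that this protrusion can be re-absorbed by the enclosing rectangle one level up. For the base case $k=0$ I would first handle case (i) with the two side-$b$ squares $[0,b]\times[0,b]$ and $[a-b,a]\times[0,b]$: since $a\le 2b$ they overlap, cover $R$ exactly with no protrusion, and meet $p$ only along their common top edge. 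For case (ii) I would split into three subcases according to the horizontal position $p_x$ of $p$: if $b\le p_x\le a-b$ use the two squares $[0,p_x]\times[0,p_x]$ and $[p_x,a]\times[0,a-p_x]$ meeting along the vertical line through $p$; if $p_x<b$ (symmetrically if $p_x>a-b$) use one square of side $b$ flush against the near side and one square of side $a-b$ for the rest. In every subcase $p$ lies on a boundary, $R$ is covered, and the top of each square is at height at most $a-b$, i.e.\ inside $R\cup R'$.

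For the inductive step ($k\ge 1$) I would take the point $p^\ast=(x^\ast,y^\ast)\in P$ of smallest $y$-coordinate, draw the horizontal line $y=y^\ast$, and apply the hypothesis to the rectangle $R_1=[0,a]\times[y^\ast,b]$ with the $k-1$ remaining points and the same top point $p$; this yields a collection ${\cal R}_1$ of at most $2(k-1)+2=2k$ squares. Since every square of ${\cal R}_1$ lies in $R_1$ (possibly protruding upward, never below $y^\ast$), the line $y=y^\ast$ is covered precisely by those squares whose bottom edge sits on it, and as none of them contains $p^\ast$ in its interior, $p^\ast$ splits the covered part of the line into a left and a right piece, exactly as in the triangle argument. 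I would then push the covering down to the bottom edge of $R$ and add two further squares bracketing $p^\ast$ --- the analogues of $T_l$ and $T_r$ --- whose common side passes through $p^\ast$, so that $p^\ast$ remains on their boundary while the bottom strip $[0,a]\times[0,y^\ast]$ gets covered. Whether $R_1$ and the bottom strip are fat or thin determines, through the two cases of the hypothesis, how far the various pieces may protrude; the total square count remains $2k+2$.

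The main obstacle --- and the reason the lemma must be stated in this strengthened form --- is that squares cannot be rescaled toward a single apex the way the triangles of Theorem~\ref{thm:triangle} can: pushing a square down to the bottom edge forces it to widen, which risks both protruding sideways out of $R$ and, more seriously, swallowing another point of $P$ into its interior. The entire aspect-ratio bookkeeping is spent controlling this. I expect the bulk of the proof to be a case analysis according to (a) whether $R$ is fat or thin, (b) whether $R_1$ and the bottom strip are fat or thin, and (c) the horizontal position of $p^\ast$; in each case one must check that no added square contains a point of $P\cup\{p\}$ in its interior, and that the whole cover stays inside $R$ in case (i) --- where, note, \emph{no} protrusion is tolerated, so a thin recursion rectangle must be prevented from poking above $b$ --- and inside $R\cup R'$ with $R'$ of height $a-2b$ in case (ii). Checking that the single bound $a-2b$ is at once large enough to absorb every protrusion produced below and small enough to be re-absorbed at the previous level is the crux of the argument.
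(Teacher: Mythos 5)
Your base cases are fine and you correctly identify why the lemma needs its two-case, aspect-ratio-sensitive form, but the inductive step you propose---peel off the lowest point $p^\ast$, recurse on the horizontal slab $R_1=[0,a]\times[y^\ast,b]$ above it, then ``push down'' squares and add two brackets---is not the paper's argument and, more importantly, it breaks in two places that your outline acknowledges but does not repair. First, nothing prevents $R_1$ from being thin: whenever $b-y^\ast<a/2$ the hypothesis applied to $R_1$ only yields a cover protruding \emph{above the top edge of $R_1$}, which is the top edge of $R$ itself; in Case (i) this is flatly forbidden ($\cup{\cal R}=R$ is required), and even in Case (ii) the allowed protrusion from $R_1$ is $a-2(b-y^\ast)$, which exceeds the budget $a-2b$ of $R$. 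You note that ``a thin recursion rectangle must be prevented from poking above $b$,'' but your plan has no mechanism to prevent it. Second, the push-down step has no square analogue of the triangle trick: scaling a triangle about its top vertex keeps its two slanted sides on fixed lines, so the newly covered region lies entirely below the cut line, where no points of $P$ are; enlarging a square so that its bottom edge reaches $y=0$ necessarily widens it at \emph{all} heights, so the new region extends sideways above the cut line and can swallow points of $P$. You name this as ``the crux'' and defer it to ``aspect-ratio bookkeeping,'' but no amount of bookkeeping in your setup resolves it---the proof is missing exactly at its hard point.

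The paper avoids both problems by cutting \emph{vertically}, not horizontally, and not at an extremal point but through the point $s\in P$ closest to the vertical halving line $x=a/2$. If some point has $b/2\le x(s)\le a-b/2$, both halves are fat and Case (i) recursion closes immediately (the cut point lies on the boundary of both halves, so the count $2k_1+2+2k_2+2\le 2k+2$ works out). Otherwise $x(s)<b/2$, and the narrow half is treated \emph{rotated by $90^\circ$}, its ``top edge'' being the cut line with $p_1:=s$: then Case (ii) protrusion goes sideways, into the strip of $R$ between $x$-coordinates $x(s)$ and $b-x(s)$, which lies inside $R$ (since $b-x(s)<a$) and is point-free precisely because $s$ was chosen closest to the middle. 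So all protrusion is absorbed \emph{inside} $R$ by regions guaranteed empty by the choice of the cut point, no square is ever rescaled, and no exact-cover requirement is ever violated. This redirection of the protrusion---sideways into a provably empty part of $R$, rather than upward out of it---is the idea your proposal lacks, and without it the induction does not close.
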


Note that $p$ is not in $P$. Also note that in the first case points of $P$ on the boundary of $R$ do not matter while in the second case points of $P$ on the top edge of $R$ are not irrelevant and can modify the choice of squares.
See Figure \ref{fig:square}
for an illustration.

\begin{figure}
\centering
\includegraphics[width=12cm]{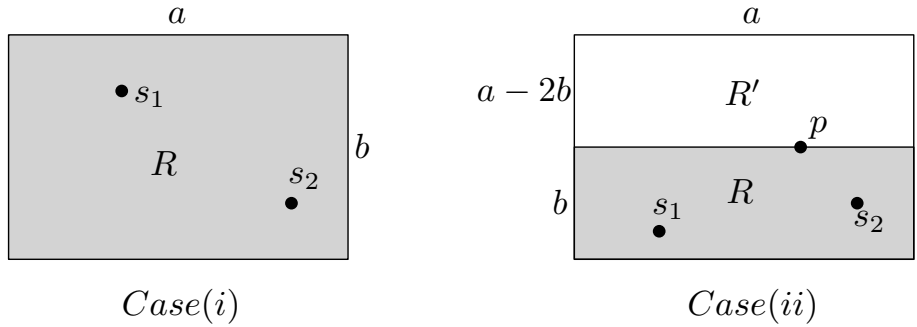}
\label{fig:square}
\caption{the rectangle $R$ to be covered by Lemma \ref{lem:square}, in Case (ii) the squares can cover parts of $R'\setminus R$ as well, but are not allowed to contain $p$ in their interior}
\end{figure} 

\begin{proof}
We can suppose that the bottom left corner of $R$ is the origin $(0,0)$.
We prove the two cases simultaneously by induction on $k$. Both cases will be quite similar, we always cut the rectangle through some point of $P$ into two as equal parts as possible and apply induction on both parts. We denote the $x$- and $y$-coordinate of a point $s$ by $x(s)$ and by $y(s)$, respectively. For a rectangle $Q$ we denote by $int Q$ its interior.

If $k=0$ then in Case (i) it is trivial to cover $R$ using two squares.
In Case (ii) we can suppose without loss of generality that $x(p)\ge a/2$. Now put a square of height $\min(a-b,x(p))$ in the bottom left corner of $R$ and a square of height $max(b,a-x(p))$ in the bottom right corner of $R$, so by definition these do not contain $p$ in their interior. 
It is easy to check that they cover $R$ and are contained in $R'$.

Next suppose $k>0$ and we are in Case (i). If there exists $s\in P$ such that $b/2\le x(s)\le a-b/2$ then cut the rectangle $R$ into two parts $R_1,R_2$ by a vertical line through $s$ and then by induction (Case (i)) we can find squares covering $R_1$ and $R_2$, together they cover $R$ and the number of the squares is at most $2k_1+2+2k_2+2=2(k_1+k_2+1)+2\le 2k+2$, where $k_1=|P\cap int R_1|, k_2=|P\cap int R_2|$.

If there is no such $s$ then choose $s$ to be the point of $P$ which is closest to the vertical halving line of $R$, i.e.\ for which $|x(s)-a/2|$ is minimal. Without loss of generality we can suppose that $x(s)<a/2$ and thus we also know that $x(s)<b/2$. We again cut by the vertical line through $s$. To get a covering of the right rectangle $R_2$ we can apply the induction hypothesis with Case (i). For the left rectangle $R_1$ we apply the induction hypothesis with Case (ii) by setting $p_{1}:=s$, $R_{1}:=R_1$ and $R'_{1}$ being the part of $R$ between the vertical lines at $x$-coordinate $x(s)$ and $b-x(s)$. The two set of squares together cover $R$ and as $b-x(s)<b<a$ implies $R'_{1}\subset R$, they do not hang out from $R$. We need to check if the covering of $R_1$ does not interfere with the points in $P\cap int R_1$. This is true if $int R'_{1}$ does not contain points from $P$, which follows from the fact that there is no point of $P$ with $x$-coordinate between $x(s)$ and $a-x(s)$ and the right edge of $R'_{1}$ has $x$-coordinate $b-x(s)<a-x(s)$. Finally, the number of squares we used is again at most $2k_1+2+2k_2+2=2(k_1+k_2+1)+2\le 2k+2$, where $k_1=|P\cap int R_1|, k_2=|(P\cap R_2)\setminus\{s\}|$.
See Figure \ref{fig:squarecase1} for an illustration. 

\begin{figure}[t]
    \centering
    \subfigure[Case (i), applying induction on the two parts of $R$, Case (ii) on $R_1$ 		and Case (i) on $R_2$]{\label{fig:squarecase1}
        \includegraphics[scale=1.1]{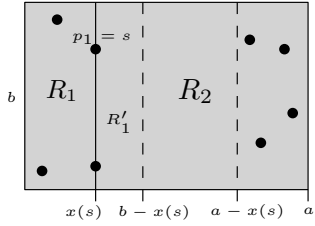}
				}    
    \hskip 20mm
    \subfigure[Case (ii), applying induction on the two parts of $R$, Case (ii) on $R_1$ 		and Case (ii) on $R_2$]{\label{fig:squarecase2}
        \includegraphics[scale=1.1]{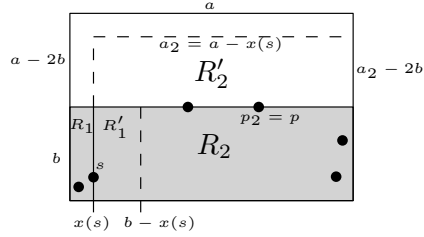}
				}
						\caption{Proof of Lemma \ref{lem:square}}
\end{figure} 	

Suppose now that $k>0$ and we are in Case (ii). Similarly to the previous case, if there exists $s\in P$ such that $b/2\le x(s)\le a-b/2$ then cut the rectangle $R$ into two parts $R_1,R_2$ by a vertical line through $s$ and then by induction (using Case (i) or Case(ii), see details below) we can find squares covering $R_1$ and $R_2$, together they cover $R$ and the number of the squares is at most $2k_1+2+2k_2+2=2(k_1+k_2+1)+2\le 2k+2$, where $k_1=|P\cap int R_1|, k_2=|P\cap int R_2|$. The induction is done in the following way. We just consider $R_2$, $R_1$ can be handled in the same way. If the ratio of the two sides of the rectangle is at most $2$, then we can simply apply induction Case (i). If the ratio of the sides is bigger than $2$, i.e., $b<(a-x(s))/2$, then we apply induction Case (ii) with $p_2=p$ if $p$ is on the top edge of $R_2$ and choosing an arbitrary $p_2$ on the top edge of $R_2$ if $p$ is not on the top edge of $R_2$.

If there is no such $s$ then choose $s$ to be the point of $P$ which is closest to the vertical halving line of $R$, i.e. for which $|x(s)-a/2|$ is minimal. Without loss of generality we can suppose that $x(s)<a/2$ and thus we also know that $x(s)<b/2$. We again cut by the vertical line through $s$. Now in the same way as in Case (i) we can apply induction Case (ii) on the left part $R_1$. It is easy to see that by the choice of $s$, $R'_1$ corresponding to $R_1$ is contained by $R$ and does not contain points from $P$ in its interior.  On the right part $R_2$, again, if the ratio of the two sides $b$ and $a-x(s)$ is at most $2$, then we can simply apply induction Case (i). If the ratio of the sides of $R_2$ is bigger than $2$, i.e., $b<(a-x(s))/2$, then we apply induction Case (ii) on $R_2$ with $p_2=p$ if $p$ is on the top edge of $R_2$ and choosing an arbitrary $p_2$ on the top edge of $R_2$ if $p$ is not on the top edge of $R_2$. It is easy to see that the rectangle $R_2'$ corresponding to $R_2$ is contained in $R'$. Thus, the two set of squares we get by induction again cover the whole $R$, are contained in $R\cup R'$, none of the squares contains $p$ in its interior and the number of squares is at most $2k_1+2+2k_2+2=2(k_1+k_2+1)+2\le 2k+2$, where $k_1=|P\cap R_1|, k_2=|P\cap (R_2\setminus R_1)|$.
See Figure \ref{fig:squarecase2} for an illustration of this case. 
\end{proof}

\section{Self-coverability of convex polygons using Delaunay triangulation} \label{secdelaunay}
In this section we prove Theorem \ref{thm:polygon}. Since the proof is a little complicated, to illustrate it, first we will reprove Theorem \ref{thm:triangle}, then Theorem \ref{thm:square} (with a worse self-coverability function) and only then prove Theorem \ref{thm:polygon} in its full generality.

The proof uses the notion of generalized Delaunay triangulation, which are the dual of generalized Voronoi diagrams.
In the generalized Delaunay triangulation (with respect to some convex shape $C$) of a point set $P$, two points of $P$ are connected if they are covered by a homothet of $C$ not containing any point of $P$ in its interior.
We will use the following properties of such triangulations (see e.g.,\ \cite{K}).

\begin{fact}\label{dfacts}
The generalized Delaunay triangulation with respect to some convex shape $C$ is a connected graph which is unique if the points are in general position with respect to $C$ (meaning no four points fall on the boundary of a homothet of $C$).
The inner faces of this graph are triangles and each inner face is covered by a homothet of $C$ not containing any points in its interior.
\end{fact}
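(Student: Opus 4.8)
The plan is to identify the generalized Delaunay triangulation with the dual of the generalized Voronoi diagram of $P$ with respect to $C$, and then to read each assertion off the standard cell structure of that diagram, keeping track of where convexity of $C$ is used. First I would fix a reference point in the interior of $C$ and work with the associated gauge $\gamma_C(v)=\inf\{\lambda>0:v\in\lambda C\}$, which is positively homogeneous and satisfies the triangle inequality $\gamma_C(a+b)\le\gamma_C(a)+\gamma_C(b)$ precisely because $C$ is convex. A homothet of $C$ is then a set $x+\lambda C$ with $\lambda>0$, and $q\in\operatorname{int}(x+\lambda C)$ iff $\gamma_C(q-x)<\lambda$, while $q\in\partial(x+\lambda C)$ iff $\gamma_C(q-x)=\lambda$. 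Defining the Voronoi cell of $p\in P$ as $\{x:\gamma_C(p-x)\le\gamma_C(q-x)\ \forall q\in P\}$ (the centers whose smallest growing homothet first meets $p$), the generalized Delaunay graph is the dual: join $p,q$ when their cells share an edge. I would check that this matches the definition in the text, since a center on a $p$--$q$ Voronoi edge is equidistant to $p,q$ and strictly farther from every other site, i.e.\ it is the center of an empty homothet with $p,q$ on its boundary --- exactly the condition that ``$p,q$ are covered by an empty homothet''.

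From this picture the remaining assertions fall out in order. A Voronoi vertex $x$ is equidistant to some sites $p_1,\dots,p_j$ and strictly farther from all others, so the common homothet $x+\lambda C$ carries $p_1,\dots,p_j$ on its boundary and is empty; the stated general position (no four sites on the boundary of a homothet) forces $j=3$, hence every Voronoi vertex has degree $3$ and every bounded dual face is a triangle $p_1p_2p_3$. Since the diagram is canonically determined by $P$ and $C$, and the only possible source of ambiguity --- a degenerate vertex of degree $\ge 4$ --- is excluded, the triangulation is unique. The covering property then uses convexity directly: the empty homothet $D=x+\lambda C$ attached to an inner face contains $p_1,p_2,p_3$, hence contains their convex hull, i.e.\ the whole triangular face, while $\operatorname{int}D$ contains no site, so each inner face is covered by an empty homothet.

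For connectivity I would again use the duality. Each closed Voronoi cell is star-shaped with respect to its site: if $\gamma_C(p-x)\le\gamma_C(q-x)$ and $y=(1-t)p+tx$ lies on the segment from $p$ to $x$, then $\gamma_C(y-x)=(1-t)\gamma_C(p-x)$ and the triangle inequality gives $\gamma_C(q-y)\ge\gamma_C(q-x)-\gamma_C(y-x)\ge\gamma_C(p-x)-(1-t)\gamma_C(p-x)=t\gamma_C(p-x)=\gamma_C(p-y)$, so $y$ stays in the cell of $p$. Thus the cells are connected, they tile the connected plane $\mathbb{R}^2$ in a face-to-face manner, and the dual of such a subdivision is connected. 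I expect the real obstacle not to be any of these steps but the low-level geometry of the diagram for a \emph{general} convex $C$: the bisector $\{x:\gamma_C(p-x)=\gamma_C(q-x)\}$ need not be a line and, when $C$ has parallel edges, can even be two-dimensional, so the clean statement that bisectors are curves meeting in isolated degree-$3$ vertices must be secured either by restricting to strictly convex $C$ or by a general-position/perturbation argument. Establishing this cell-complex structure is the delicate point, and it is the part I would rather invoke from \cite{K} than reprove in full.
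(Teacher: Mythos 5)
The first thing to note is that the paper does not prove this Fact at all --- it is imported from the literature with the pointer ``(see e.g., \cite{K})'', so there is no in-paper argument to measure your proposal against. What you have written is a reconstruction of the standard proof that sits behind that citation: dualize the Voronoi diagram of the convex distance function $d_C(x,p)=\gamma_C(p-x)$. The steps you actually carry out are correct: the identification of Delaunay edges with shared Voronoi edges via centers of empty homothets, the star-shapedness computation (which is the right way to get connectivity of the cells and hence of the dual), the observation that the stated general position forces exactly three equidistant sites at a Voronoi vertex and hence bounded dual faces to be $3$-cycles, and the use of convexity to conclude that the empty homothet attached to a face contains the whole geometric triangle.

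However, the gap you flag in your last sentences is genuine, and it cannot be repaired by the first remedy you propose. The paper applies this Fact precisely to non-strictly-convex shapes --- triangles, squares, arbitrary convex polygons --- so restricting to strictly convex $C$ is not available. For a polygon with a pair of parallel edges, two sites whose difference is parallel to such an edge have a bisector with nonempty interior (for the square: two points on a vertical line), and this degeneracy is \emph{not} excluded by the hypothesis ``no four points on the boundary of a homothet''; indeed, the paper's own proof of Theorem \ref{thm:square} adds to $P$ the orthogonal projections of each point onto the sides, creating exactly such pairs. Consequently the cell-complex structure your dual argument rests on (bisectors are curves, vertices are isolated, three cells meet at each vertex), and with it the uniqueness claim, cannot be taken for granted as stated; the paper concedes this in a footnote to the second proof of Theorem \ref{thm:triangle}, where it admits the triangulation ``might not be unique'' and simply fixes one. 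To make your write-up self-contained you would need a perturbation or tie-breaking argument (or a stronger general-position hypothesis), and a related point you pass over silently --- that the combinatorial $3$-faces of the dual, drawn with straight edges, form a plane triangulation tiling the convex hull, which is what the paper's Euler-formula counts use --- needs the same machinery. As it stands, your proposal is sound exactly up to where it defers to \cite{K}, which is also what the paper does, only for the entire statement.
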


For illustrations of the following proofs, see Figure \ref{fig:delaunaytriangle} and Figure \ref{fig:delaunaysquare}.

\begin{figure}[t]
    \centering
		    \subfigure[Proof of Theorem \ref{thm:triangle}]{\label{fig:delaunaytriangle}
				\includegraphics[scale=0.5]{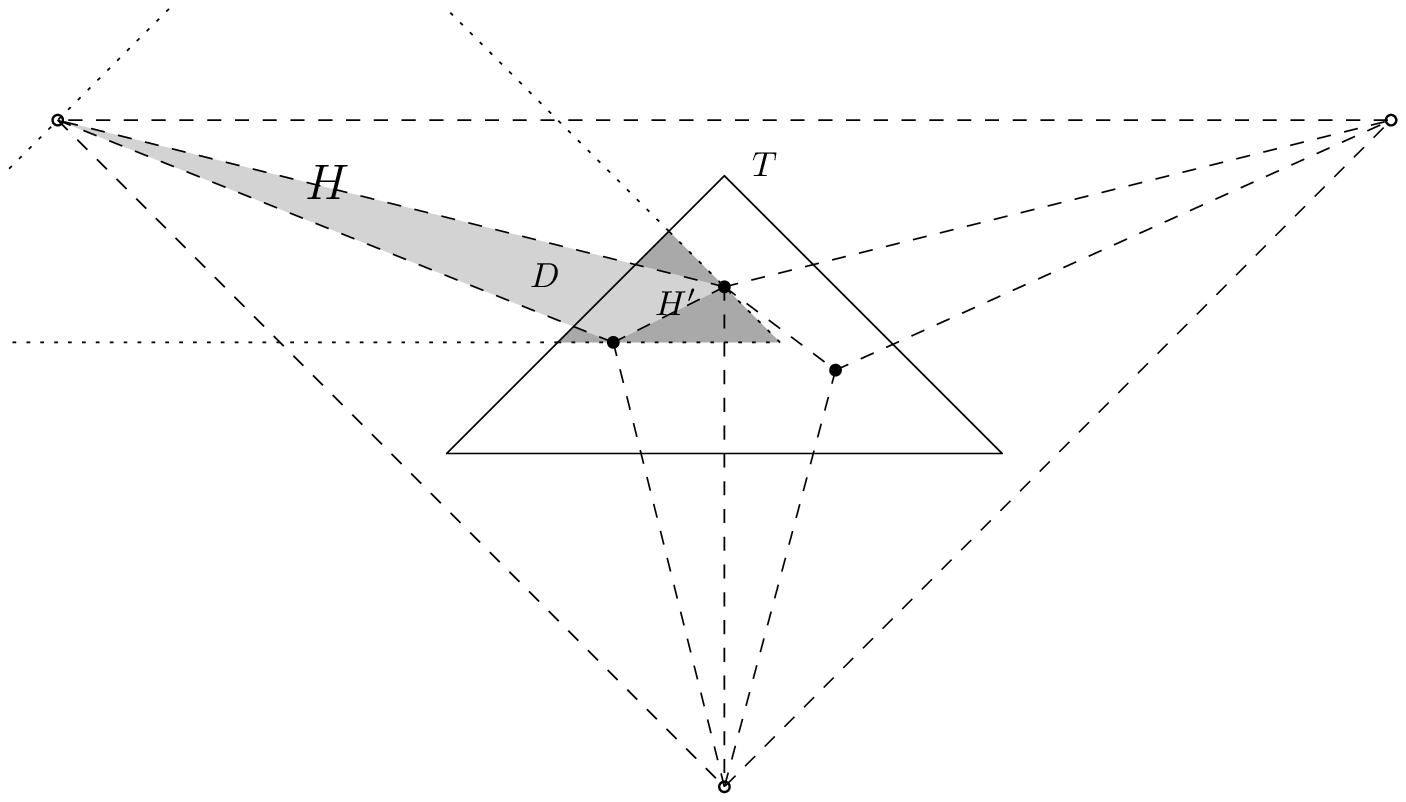}
				}    
				\hskip 10mm
    \centering
				\subfigure[Proof of Theorem \ref{thm:square}]{\label{fig:delaunaysquare}
				\includegraphics[scale=0.65]{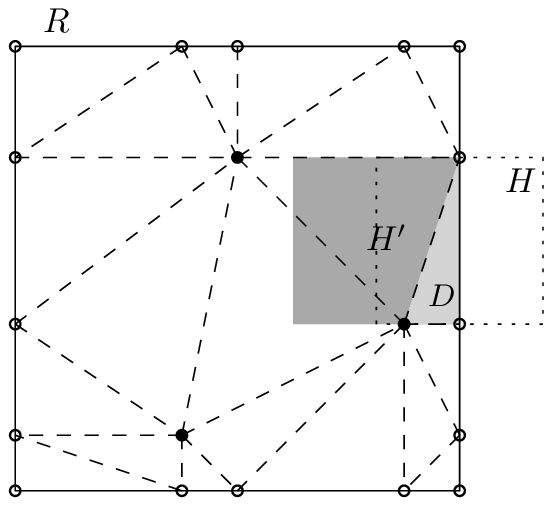}
				}    
				\caption{Proof of Theorem \ref{thm:triangle} and Theorem \ref{thm:square} using Delaunay-triangulations}
\end{figure} 	

\begin{proof}[Second proof of Theorem \ref{thm:triangle}] We add three new points to $P$ which are far, outside of $T$, and form a reflected copy of $T$. Denote the new point set by $P'$. In the\footnote{Since it might not be unique, we should say {\em a} Delaunay triangulation but to avoid confusion we fix one of the Delaunay triangulations if there are more and use the definite article.} Delaunay triangulation determined by $T$, these three points will be all connected, making all the faces triangles. Using Euler's formula, there are $k+3$ vertices and thus $2(k+3)-4$ faces, so we have $2k+1$ inner faces, all of which can be covered by a homothet of $T$ not containing any point of $P'$ in its interior.

The only problem is that these homothets might extend beyond the boundary of $T$. But it is easy to see that for any homothet $H$ of $T$ the triangle $H'=H\cap T$ is also a homothet of $T$, so these give at most $2k+1$ covering triangles.
\end{proof}

\begin{proof}[Second proof of Theorem \ref{thm:square}] (with worse self-coverability function) Similarly to the proof for triangles, we add a few points to $P$ and we denote the new point set by $P'$. Now all new points will be on the boundary of the square $R$. The new points are obtained as follows. For each $p\in P$ project it orthogonally to all four sides of $R$ and add it to $P'$. Also we add the four corners of $R$ to $P'$. Thus $|P'|=5k+4$ if all vertices of $P$ have different coordinates, which we suppose from now on for simplicity. 
In the Delaunay triangulation determined by $R$, all boundary points will be connected to their neighbors (on the boundary), making all inner faces triangles. The infinite outer face has $4k+4$ vertices.
Using Euler's formula, there are $5k+4$ vertices and thus $2\cdot (5k+4)-4-(4k+1)=6k+3$ faces. The $6k+2$ triangular inner faces partition $R$ and all of them can be covered by a homothet of $R$ not containing any point of $P'$ in its interior.

Again, the problem is that these homothets might extend beyond the boundary of $R$.
To take care of this, observe first that each square $H$ that extends beyond $R$ intersects only one side $s$ of $R$.
For each such $H$, push $H$ perpendicularly to $s$ until its outer side overlaps with $s$, call the pushed square $H'$.
This way no point $p\in P$ can get into the interior of $H'$ since then the projection of $p$ to $s$ would have been also inside $H$.
So we get $f(k)\le 6k+2$.
\end{proof}

\begin{proof}[Proof of Theorem \ref{thm:polygon}]
Let $C$ be an arbitrary convex polygon. Denote its vertices in clockwise order by $c_0c_1\dots c_{n-1}$ and its sides by $e_i=c_ic_{i+1}$. We will again add some points to $P$ to define $P'$ and take the Delaunay triangulation of $P'$ with respect to $C$. All the added points lie on the boundary or outside of $C$ and their positions depend on the point set $P$ as follows.

For each $p\in P$ and side $e_i$ (indices are always modulo $n$) of $C$ we do the following.
First draw two lines through $p$ such that the first is parallel to $e_{i-1}$ and the second is parallel to $e_{i+1}$.
These intersect the supporting line $l_i$ of $e_i$ in two points, $p_l$ and $p_r$.
(See Figure \ref{fig:polygon} for an illustration.)
Any homothet $C'$ of $C$ that intersects $l_i$ and has $p$ on its boundary  contains a point of the $\overline{p_lp_r}$ segment.
(Here we allow $C'$ to contain more points in its interior.)
Take a $C'$ for which the length of the intersection of (the closure of) $C'$ and $l_i$ is minimal and denote this minimum by $\epsilon$.
It is easy to see that $\epsilon$ is well-defined and positive.

Also observe that $|p_lp_r|/\epsilon$ depends only on $C$ and $i$ and is independent of the position of $p$, since translating $p$ parallel to $e_i$ only shifts the $pp_lp_r$ triangle with the same quantity, while moving $p$ perpendicularly to $e_i$ only scales $pp_lp_r$, thus scaling both $\epsilon$ and $|p_lp_r|$ by the same value.

Now put $N_i=\lfloor |p_lp_r|/\epsilon\rfloor$ evenly spaced points on the segment $\overline{p_lp_r}$, so that the distance between any two of them is less than $\epsilon$. Moreover, add one point very close to $p_l$ and another very close to $p_r$ onto $l_i$ but not on $\overline{p_lp_r}$, such that the distances from them to the next point is still less than $\epsilon$.
We add these $N_i+2$ points to $P'$.
Since the distance between any two of them is less than $\epsilon$, any homothet of $C$ with $p$ on its boundary and intersecting $l_i$ contains one of the just added $N_i+2$ points in its interior by the definition of $p_l, p_r$ and $\epsilon$.
The number of points we added depends only on $C$ and $i$, not on $p$.
Repeating this for all points and edges of $C$, we add at most $k\sum_{i=1}^n(N_i+2)=O(k)$ points.
(Here the constant in the $O(.)$ notation depends on $C$.)

To ensure that the outerface would not intersect the interior of $C$, we add $n$ more points to $P'$ which are far, outside of $C$, and form a reflected copy of $C$. This way these $n$ points will be the vertices of the outerface.

In the Delaunay triangulation of $P'$ the inner faces are triangles, with a homothet of $C$ covering each triangle not containing any point of $P'$ in its interior. Further, if any of these homothets would extend beyond $C$, then it would contain an added point in its interior, as each of them touches at least one point $p\in P$.

Using Euler's formula, there are $O(k)$ vertices and thus at most $O(k)$ faces. Each inner face is a triangle which can be covered by a homothet of $C$ not containing any point of $P'$ in its interior.
\end{proof}

\begin{figure}[t]
    \centering
		    \subfigure[Proof of Theorem \ref{thm:polygon}]{\label{fig:polygon}
				\includegraphics[scale=0.76]{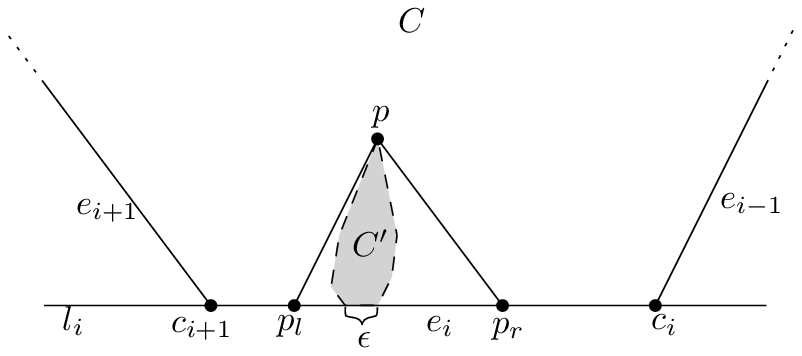}
				}    
    \centering
				\subfigure[Proof of Theorem \ref{thm:quadrangle}]{\label{fig:quadrangle}
				\includegraphics[scale=0.76]{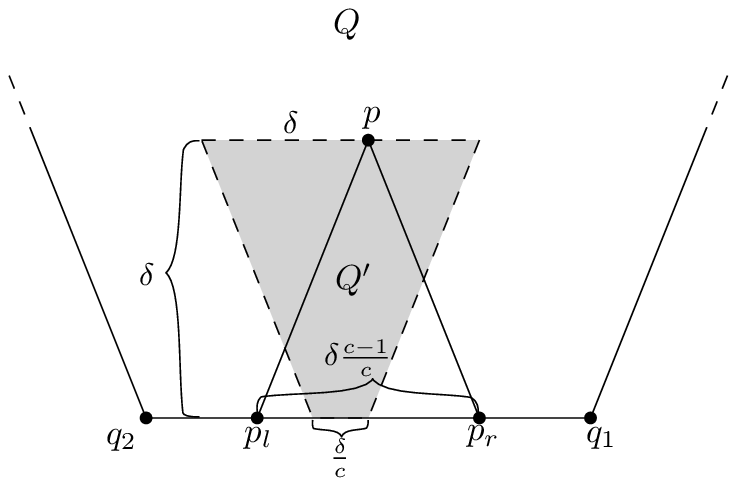}
				}    
\end{figure} 	

Finally, we prove Theorem \ref{thm:quadrangle}, which states that even for a quadrangle we may need many points.
For that we basically prove that while by the above upper bound states that at most $k\sum_{i=1}^n(N_i+2)$ copies are enough to cover $C$, also at least $k\min_i N_i$ copies are necessary to cover $C$.

\begin{proof}[Proof of Theorem \ref{thm:quadrangle}]
Given $c>1$, let $Q$ be a symmetric trapezoid with vertices $q_1,q_2,q_3,q_4$ in clockwise order, with two horizontal edges, the bottom edge, $\overline{q_1q_2}$ has length $1/c$, while the length of the top edge, $\overline{q_3q_4}$, and the height of $Q$ are both equal to $1$. We show that for $Q$ we have $f(k)\ge (c-1)k$. 

Put a point $p$ very close to the bottom edge of $Q$, say the distance of $p$ from the bottom edge is $\delta$. We define $p_l$ and $p_r$ as in the previous proof (see Figure \ref{fig:quadrangle}). Evidently, in the self-cover of $Q$, the points of $p_lp_r$ can only be covered by homothetic copies of $Q$ whose upper edge touches or is below $p$, thus have height at most $\delta$. The length of the top edge of such a $Q'$ is thus also at most $\delta$ and thus the length of its bottom edge is at most $\delta/c$. We also know that $|p_lp_r|=\delta(1-1/c)$. Thus to cover the points of $p_lp_r$ we need at least $\frac{\delta(1-1/c)}{\delta/c}=c-1$ such homothets.

Now if instead of one, we put $k$ points very close to the bottom edge, but far from each other, then for each point we need $c-1$ homothets to cover the respective segment on the boundary of $Q$, thus altogether we need at least $(c-1)k$ homothets, as claimed.
\end{proof}

\section{Remarks and open problems}

Since the first version of this paper, the framework developed here was already successfully applied by Cardinal et al.\ \cite{colorful2} and they obtained that for an octant any finite set of points can be colored with $k$ colors such that any translate of the octant with at least $m_k= 12\cdot (23)^{\lceil \log k\rceil-1}\le 12(k-1)\cdot k^{\log 23}=O(k^{5.53})$ points contains all $k$ colors.
As a corollary, they also obtained that any $m_k$-fold covering of the plane with homothets of a triangle can be decomposed into $k$ coverings.
It was also shown recently by Kov\'acs \cite{K13} that this result cannot be extended to other polygons, as for any $m$ and any non-triangle polygon $C$ there is an $m$-fold covering of the plane with homothets of $C$ that does not decompose into $2$ coverings.

The main question left open is whether for every self-coverable family $m_k=O(k)$, that is, is it true that any finite set of points can be colored with $k$ colors such that any member of the family with $m_k=O(k)$ points contains all $k$ colors if there is an $m$ such that any finite set of points can be colored with two colors such that any member of the family with $m$ points contains both colors?

Similar questions are also open for the dual problem about cover-de\-com\-po\-si\-tion, for more see \cite{PPT11} or \cite{P10}.\\

We would like to thank the anonymous referees for their useful suggestions about how to improve our paper.

\end{document}